\definecolor{blau}{rgb}{0.1,0.0,0.9}
\definecolor{gruen}{cmyk}{1.0,0.2,0.7,0.07}
\definecolor{mag}{cmyk}{0.0,0.9,0.3,0.0}
\newtheorem{theorem}{Theorem}[section]
\newtheorem{lemma}[theorem]{Lemma}
\newtheorem{corollary}[theorem]{Corollary}
\theoremstyle{definition}
\begin{document}
\date{\today}
\title{On restricted colorings of $(d,s)$-edge colorable graphs}
%Avoiding $(m,5)$-arrays of order $n=2^k$}

\author{
{\sl Lan Anh Pham }\footnote{Department of Mathematics, 
Ume\aa\enskip University, 
SE-901 87 Ume\aa, Sweden.
{\it E-mail address:} lan.pham@umu.se}
}
\maketitle

\bigskip
\noindent
{\bf Abstract.}
A cycle is \emph{$2$-colored} if its edges are properly colored by two distinct colors.
A \emph{$(d,s)$-edge colorable graph} $G$ is a $d$-regular graph that admits a proper $d$-edge coloring
in which every edge of $G$ is in at least $s-1$ $2$-colored $4$-cycles. Given a $(d,s)$-edge colorable graph 
$G$ and a list assigment $L$ of forbidden colors for the edges of $G$ satisfying certain sparsity 
conditions, we prove that there is a proper $d$-edge coloring of $G$ 
that avoids $L$, that is, a proper edge coloring $\varphi$ of $G$
such that $\varphi(e) \notin L(e)$ for every edge $e$ of $G$.

%A \emph{$2$-colored $4$-cycle} in a graph is a cycle of length $4$ in which its edges are properly colored by two distinct colors. 
\bigskip

\noindent
%\small{\emph{Keywords: Cayley graph, Abelian group}}

\section{Introduction}
A graph $G$ is \emph{$k$-edge list colorable} or \emph{$k$-edge choosable} if for every assignment 
of lists of at least $k$ colors to the edges of $G$, there is a proper edge coloring of $G$ using only colors from the lists.
The \emph{list chromatic index} or \emph{edge choosability} $\chi'_l(G)$ of a graph $G$ is the minimum number $k$
such that $G$ is $k$-edge list colorable. The most famous conjecture about list coloring states that $\chi'_l(G)=\chi'(G)$ \cite{Jensen},
where $\chi'(G)$ is the chromatic index of $G$, refering to the smallest number of colors needed to color the edges of $G$ 
to obtain a proper edge coloring. In $1994$, Galvin \cite{Gal95} proved this conjecture for bipartite multigraphs, his result 
also answers a question of Dinitz in 1979 about the generalization of Latin squares which can be formulated as a result of 
list edge coloring of the complete bipartite graph 
$K_{d,d}$, that $\chi'_l(K_{d,d})=d$.

Meanwhile, H\"aggkvist \cite{Haggkvist} worked with sparser lists, his conjecture of avoiding arrays can be rewritten in
the language of graph theory to state that
there exists a fixed $0<\beta \leq \frac{1}{3}$
such that if each edge $e$ of $K_{d,d}$ is
assigned a list $L(e)$ of at most $\beta d$ colors from 
$\{1, \dots,d\}$ and at every vertex $v$ each color is forbidden on at 
most $\beta d$ edges adjacent to $v$, then there is a proper $d$-edge coloring $\varphi$ of $K_{d,d}$ that {\em avoids} the lists, 
i.e $\varphi(e) \notin L(e)$ for every edge $e$ of $K_{d,d}$; if such a coloring exists, then $L$ is \textit{avoidable}.
For the case when $d$ is a power of two, 
Andr\'en proved that such a $\beta$ exists
in \cite{Lina}; the full conjecture was later settled in the affirmative
in \cite{AndrenCasselgrenOhman}.

Casselgren et al. \cite{JohanQd} demonstrated that a similar result
holds for the family of hypercube graphs. A benefit of working with the complete bipartite graph $K_{d,d}$ ($d=2^t$, $t \in \mathbb{N}$) and 
the $d$-dimensional hypercube graph $Q_d$ ($d \in \mathbb{N}$) is that they are both regular graphs that have proper edge colorings in which every edge is in $(d-1)$ $2$-colored $4$-cycles.
%(a cycle of length $4$ and its edges are properly colored by two distinct colors).
The purpose of this paper is to study this type of problem for %the family of 
regular graphs where the number of $2$-colored $4$-cycles each edge 
is contained in can be smaller.

To be more specific, we consider the family of 
{\em $(d,s)$-edge colorable graphs}: 
a $d$-regular graph $G$ is called {\em $(d,s)$-edge colorable} 
if it admits a proper $d$-edge coloring $h$
in which every edge of $G$ is contained 
in at least $(s-1)$ $2$-colored $4$-cycles.
Note that since the number of $2$-colored $4$-cycles containing 
an edge $e$ of $G$ is at most $(d-1)$, 
$s$ can not exceed $d$. The $d$-edge coloring $h$ is called \emph{standard coloring}.
A \emph{standard matching} $M$ of $G$ is a maximum set of edges of $G$ 
all of which have the same color in the standard coloring $h$. 

The {\em distance} between two
edges $e$ and $e'$ is the number of edges in a 
shortest path between an endpoint of $e$ and an endpoint of $e'$;
a {\em distance-$t$ matching} is a matching where any
two edges are at distance at least $t$ from each other.
The {\em $t$-neighborhood} of an edge $e$ is
the graph induced by all edges of distance at most $t$ from $e$.

Throughout, we shall assume that the standard coloring 
$h$ for the edges of $G$ uses the set of colors $\{1,\dots, d\}$. 
Next, similarly to \cite{JohanQd}, using the colors $\{1,\dots, d\}$
we define a list assignment for the edges of a $(d,s)$-edge colorable graph.

	A list assignment $L$ for a $(d,s)$-edge colorable graph $G$ is {\em $\beta$-sparse}
	if the list of each edge is a (possibly empty) subset of $\{1,\dots,d\}$, 
	and
	\begin{itemize}
	
	\item[(i)] $|L(e)| \leq \beta s$ for each edge $e \in G$;
	
	\item[(ii)] for every vertex $v \in V(G)$,
	each color in $\{1,\dots,d\}$ occurs in at most $\beta s$
	lists of edges incident to $v$;
	
	\item[(iii)] for every $6$-neighborhood $W$, and every 
	standard matching $M$, any color appears at most $\beta s$
	times in lists of edges of $M$ contained in $W$.
        \end{itemize}

We can now formulate our main result.
\begin{theorem}
\label{maintheorem}
Let $G$ be a $(d,s)$-edge colorable graph of order $n$ 
and $L$ be a $\beta$-sparse list assignment for $G$.
If $s \geq 11$ and $\beta \leq 2^{-11}sd^{-1} (2n)^{-2^9ds^{-2}}$, 
then there is a proper $d$-edge coloring of $G$ which avoids $L$.
\end{theorem}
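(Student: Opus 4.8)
The plan is to start from the standard coloring $h$ and eliminate all \emph{conflicts} — edges $e$ with $h(e)\in L(e)$ — by performing a carefully chosen family of local recolorings, each of which interchanges the two colors on a $2$-colored $4$-cycle. Such a \emph{swap} on a cycle with colors $\{a,b\}$ recolors exactly its four edges and preserves properness, so any sequence of swaps keeps the coloring proper; the only thing that can go wrong is that some edge ends up with a forbidden color. The first fact I would record is that swaps provide each edge with many escape routes. If $e$ is colored $c$ and lies in a $2$-colored $4$-cycle whose second color is $c'$, then properness forces the colors to alternate $c,c',c,c'$ around the cycle, and the $c'$-edges at the two endpoints of $e$ are unique; hence this cycle is the \emph{unique} $2$-colored $4$-cycle through $e$ with colors $\{c,c'\}$, and the $\ge s-1$ cycles through $e$ realize $s-1$ \emph{distinct} second colors. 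Since $|L(e)|\le \beta s$ is tiny, at least $s-1-\beta s$ of these swaps recolor $e$ to a color outside $L(e)$, so each conflict admits $\Omega(s)$ candidate corrections.

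The difficulty is that a swap through $e$ also recolors the three other edges of its cycle, which may turn them into \emph{new} conflicts, and nearby swaps may interfere with one another. I would therefore treat the corrections probabilistically: assign to each conflict edge an independent, uniformly random correcting cycle from its pool of $\Omega(s)$ good candidates (equivalently, an independent random decision for each $2$-colored $4$-cycle as to whether it is swapped). For a fixed edge $f$, the event $B_f$ that $f$ ends the process with a color in $L(f)$ depends only on the random decisions of cycles meeting $f$ within a bounded radius, and by the distinctness observation each forbidden outcome for $f$ is produced by only a controlled number of these choices, so $\Pr[B_f]$ is small — of order $\beta$ up to factors depending on $s$. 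Applying the Lovász Local Lemma to the family $\{B_f\}$ then yields, with positive probability, an assignment of swaps after which no edge is in conflict, i.e. a proper $d$-edge coloring of $G$ avoiding $L$.

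Making this quantitative is where the three sparsity conditions and the precise bound on $\beta$ enter. Condition (i) bounds the forbidden colors per edge, (ii) bounds per vertex how often a given color is forbidden, and (iii) — phrased via $6$-neighborhoods and standard matchings — is exactly what controls interference \emph{within a single color class}, since a swap moves edges between two color classes and several swaps sharing a color are the dangerous correlated events. Two swapped cycles interfere only when they lie within bounded distance, so the dependency degree of the events $\{B_f\}$ is governed by the number of edges and cycles in a neighborhood; bounding this by a power of the graph size accounts for the factor $(2n)^{2^9 d s^{-2}}$, while the slack required for the local-lemma inequality — essentially that the $\Omega(s)$ good corrections dominate the $O(\beta s)$ forbidden ones with room to spare — forces the hypothesis $s\ge 11$ and the leading constant $2^{-11}sd^{-1}$.

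I expect the main obstacle to be controlling the \emph{cascade}: because correcting one conflict can create fresh conflicts on the other edges of its cycle, a single round of swaps need not terminate cleanly, and the events $B_f$ genuinely couple the decisions of all color classes passing near $f$. The natural way to tame this, and the likely source of the dependence of $\beta$ on $n$, is to iterate: in each round extract from the current conflict set a sub-family whose correcting cycles form a distance-$t$ matching, so that their swaps act on pairwise disjoint $6$-neighborhoods and cannot interfere, repair those conflicts simultaneously and exactly, and recurse on what remains, showing that the conflict density contracts from round to round. The number of rounds needed is polynomial in $ds^{-2}$, which is why the error must be suppressed by the factor $(2n)^{2^9 d s^{-2}}$; verifying that the per-round repair never reintroduces conflicts faster than it removes them, uniformly over all neighborhoods, is the technical heart of the argument and the step I would expect to be the most delicate.
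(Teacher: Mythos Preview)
Your proposal misses the central idea of the paper's argument and, as written, does not go through.

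The paper's proof has two phases, and the first is one you do not have. Before any swaps are performed, the paper applies a \emph{random permutation} $\rho$ to the set of colors $\{1,\dots,d\}$ used by the standard coloring $h$, and proves by a direct counting (not LLL) that some permutation yields a coloring $h'$ in which (a) every $6$-neighbourhood has at most $\gamma s$ conflicts in each standard matching, (b) every vertex is incident with at most $\gamma s$ conflicts, and (c) every edge lies in at least $(1-\tau)s$ \emph{allowed} $2$-colored $4$-cycles. The factor $(2n)^{-2^{9}ds^{-2}}$ in the bound on $\beta$ comes precisely from this counting step (from forcing $n(e\beta/\gamma)^{\gamma s}<\tfrac12$ with $\gamma=2^{-9}sd^{-1}$), not from any iteration or LLL dependency degree. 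The second phase is then a single, deterministic, greedy pass: for each remaining conflict one selects an allowed cycle that is disjoint from all previously selected cycles and does not overload any vertex or matching; the inequalities guaranteed by phase one leave enough room for such a choice. All the selected cycles are then swapped simultaneously, and the resulting coloring avoids $L$. There is no recursion and no local lemma.

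Your plan skips the permutation and starts swapping from $h$ directly. Without that step you lose conditions (b) and (c): under $h$ a single vertex may carry up to $d$ conflict edges (nothing in the $\beta$-sparsity axioms prevents $h(uv)\in L(uv)$ for every $v$), and an edge may have almost none of its $2$-colored cycles ``allowed''. In your LLL picture this means the bad events $B_f$ have dependency degree on the order of $d$ (or worse) rather than $O(\beta s)$, and the probability bound you sketch for $\Pr[B_f]$ is not available because the pool of genuinely safe correcting cycles can be nearly empty. Your alternative ``iterate on a distance-$t$ matching and watch the conflict density contract'' is likewise unsupported: a swap can \emph{create} three new conflicts while removing one, so without the permutation-phase guarantees you have no contraction inequality. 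In short, the missing ingredient is the global color permutation that makes the conflict set uniformly sparse before any local repair begins; once you have it, one disciplined round of disjoint swaps suffices, and neither LLL nor multi-round iteration is needed.
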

%$\beta=\min \{\dfrac{1}{2^8}(nd)^{-2^6s^{-1}},\dfrac{1}{2^{10}}sd^{-1} (2n)^{-2^{-8}ds^{-2}}\}$
%A counter example in \cite{JohanQd} gives us a upper bound on the value of $\beta$, 
%i.e if $L$ is avoidable then $\beta <\dfrac{1}{2}$.

The following corollary gives us a shortened form of the condition on 
$\beta$ as a function of $d$ and $n$ when $s/d$ is constant.
%in the case the quotient of $s$ and  $d$ is constant.

\begin{corollary}
\label{shortenbeta}
Let $G$ be a $(d,s)$-edge colorable graph of order $n$ such that 
$s \geq 11$ and $s/d$ is constant; then there are positive constants 
$c_1,c_2$ such that if $\beta \leq c_1 (2n)^{-c_2d^{-1}}$, 
then any $\beta$-sparse list assignment $L$ for $G$ is avoidable.
\end{corollary}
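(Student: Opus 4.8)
The plan is to derive the corollary directly from Theorem~\ref{maintheorem} by exploiting the hypothesis that $s/d$ is constant to collapse the sparsity bound into the advertised form. Write $r := s/d$ for the fixed ratio, so that $s = rd$; since $s \geq 11 > 0$ we have $r > 0$. The strategy is to show that, for a suitable choice of constants $c_1, c_2$ depending only on $r$, the bound $\beta \leq c_1 (2n)^{-c_2 d^{-1}}$ appearing in the corollary coincides exactly with the bound $\beta \leq 2^{-11} s d^{-1}(2n)^{-2^9 d s^{-2}}$ of the theorem. Once the two hypotheses are identified, the conclusion is immediate.

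First I would handle the prefactor. Because $s d^{-1} = r$ is constant, the quantity $2^{-11} s d^{-1}$ equals the positive constant $2^{-11} r$, independent of $d$ and $n$; this suggests setting $c_1 := 2^{-11} r$. Next I would rewrite the exponent $2^9 d s^{-2}$. Substituting $s = rd$ gives $s^{-2} = r^{-2} d^{-2}$, so that $2^9 d s^{-2} = 2^9 r^{-2} d^{-1}$, and hence the factor $(2n)^{-2^9 d s^{-2}}$ becomes $(2n)^{-(2^9 r^{-2})\, d^{-1}}$. This isolates the dependence on $d$ into a single factor $d^{-1}$ in the exponent, multiplied by the constant $2^9 r^{-2}$; I would therefore set $c_2 := 2^9 r^{-2}$, which is again positive.

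With these two identifications the theorem's bound reads verbatim as $\beta \leq c_1 (2n)^{-c_2 d^{-1}}$, so the hypothesis of the corollary is nothing more than a repackaging of the hypothesis of Theorem~\ref{maintheorem}. Combined with $s \geq 11$, the theorem then yields a proper $d$-edge coloring of $G$ avoiding $L$; that is, $L$ is avoidable. The one point worth stating explicitly is that $c_1$ and $c_2$ depend only on the fixed ratio $r = s/d$ and not on $d$ or $n$ separately, which is exactly the meaning of ``positive constants'' in the statement. I do not anticipate any genuine obstacle here: the entire content is the routine exponent manipulation that converts the two-parameter expression $2^9 d s^{-2}$ into the one-parameter form $c_2 d^{-1}$ under the constraint $s = rd$, and the result follows by direct appeal to Theorem~\ref{maintheorem}.
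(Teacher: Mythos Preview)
Your proposal is correct and matches the paper's approach: the corollary is stated without a separate proof because it follows immediately from Theorem~\ref{maintheorem} by substituting $s=rd$, which is exactly the computation you carry out. Your explicit identification of $c_1=2^{-11}r$ and $c_2=2^9r^{-2}$ simply spells out what the paper leaves implicit.
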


If the value of $d$ in Corollary \ref{shortenbeta} is at least $c\log n$ 
($c$  constant), then $\beta$ will be constant.
\begin{corollary}
\label{constantbeta}
Let $G$ is a $(d,s)$-edge colorable graph of order $n$
such that $s \geq 11$ and suppose that there 
are constants $\kappa \leq 1$ and $c$ satisfying $s=\kappa d$ 
and $d \geq c \log n$; then there is a constant $\beta >0$ such that
any $\beta$-sparse list assignment $L$ for $G$ is avoidable.
\end{corollary}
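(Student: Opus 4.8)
The plan is to derive this directly from Corollary \ref{shortenbeta}, of which it is essentially a specialization under a stronger hypothesis. Since $s = \kappa d$ with $\kappa \leq 1$ a fixed constant, the ratio $s/d = \kappa$ is constant, so Corollary \ref{shortenbeta} applies and furnishes positive constants $c_1, c_2$ (depending only on $\kappa$) with the property that every $\beta$-sparse list assignment for $G$ is avoidable as soon as $\beta \leq c_1 (2n)^{-c_2 d^{-1}}$. Thus it suffices to exhibit a single positive constant $\beta$, independent of $n$ and $d$, that satisfies this threshold inequality for every graph $G$ meeting the hypotheses; the hypothesis $d \geq c\log n$ is exactly what makes this possible.

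First I would rewrite the threshold in exponential form, $(2n)^{-c_2 d^{-1}} = \exp\!\left(-c_2 d^{-1}\ln(2n)\right)$, so that the task reduces to bounding the exponent $c_2 d^{-1}\ln(2n)$ from above by an absolute constant. Writing $\ln(2n) = \ln 2 + \ln n$, I would control the dominant term using $d \geq c\log n$: up to the fixed conversion factor between $\log$ and $\ln$ (which I absorb into $c$), this gives $\ln n / d \leq 1/c$. The residual term $\ln 2 / d$ is handled by $d \geq s \geq 11$, and since $\ln 2 < 1$ one obtains $c_2 d^{-1}\ln(2n) \leq c_2\!\left(1/c + 1/11\right)$, an absolute constant depending only on $c$ and $\kappa$.

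Consequently $(2n)^{-c_2 d^{-1}} \geq \exp\!\left(-c_2(1/c + 1/11)\right) =: C > 0$ uniformly over all admissible $G$, $n$, and $d$. Setting $\beta := c_1 C$, a positive constant depending only on $\kappa$ and $c$, the threshold inequality $\beta \leq c_1(2n)^{-c_2 d^{-1}}$ holds, and Corollary \ref{shortenbeta} then guarantees that every $\beta$-sparse list assignment $L$ for $G$ is avoidable, as claimed. I do not anticipate a genuine obstacle here: the conceptual content is entirely carried by Corollary \ref{shortenbeta}, and the only point requiring care is the elementary bookkeeping around the additive $\ln 2$ term and the base of the logarithm, both of which are absorbed harmlessly into the constants.
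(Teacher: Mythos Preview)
Your proposal is correct and follows exactly the route the paper indicates: the remark preceding Corollary~\ref{constantbeta} already states that once $d \geq c\log n$ in Corollary~\ref{shortenbeta}, the bound $c_1(2n)^{-c_2 d^{-1}}$ becomes a constant, and you have simply written out this computation in full.
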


Note that the complete bipartite graph $K_{d,d}$ ($d=2^t$, $t \in \mathbb{N}$) and
the $d$-dimensional hypercube graph $Q_d$ ($d \in \mathbb{N}$) are both $(d,d)$-edge colorable graphs
satisfying the condition in Corollary \ref{constantbeta}. Thus this corollary
generalizes the results in \cite{Lina, JohanQd, AndrenCasselgrenOhman}.
The next corollary examines the condition on $d$ and $s$
so that for every $(d,s)$-edge colorable graph $G$ of order $n$ 
and any $\beta$-sparse list assignment $L$ for $G$ satisfying that the length 
of every list in $L$ is constant, 
there is a proper $d$-edge coloring of $G$ which avoids $L$.

\begin{corollary}
\label{constantlist1}
Let $G$ be a $(d,s)$-edge colorable graph $G$ of order $n$. If $s \geq 11$ 
and for some constant $c$ we have %$\dfrac{\log (2^{-11}c^{-1} s^2d^{-1})}{2 \log (2^8ds^{-2})} \geq n$
$\dfrac{1}{2} (2^{-11}c^{-1} s^2d^{-1})^{2^{-9}d^{-1}s^{2}} \geq n$,
then any $\dfrac{c}{s}$-sparse list assignment $L$ for $G$ is avoidable.
\end{corollary}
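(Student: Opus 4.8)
The plan is to derive Corollary \ref{constantlist1} directly from Theorem \ref{maintheorem} by a bookkeeping argument, with no new combinatorial content: the entire task is to check that the hypotheses of the corollary force the quantitative hypothesis on $\beta$ demanded by the theorem. Set $\beta = c/s$, which is exactly the value for which a $\tfrac{c}{s}$-sparse list assignment is precisely a $\beta$-sparse one in the sense used by the theorem. Since we are given $s \geq 11$, the only remaining thing to verify is the inequality
\[
\beta \;=\; \frac{c}{s} \;\leq\; 2^{-11}\, s\, d^{-1}\, (2n)^{-2^{9} d s^{-2}},
\]
after which Theorem \ref{maintheorem} applies verbatim and yields the desired avoiding coloring.

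First I would rewrite the target inequality so that $n$ is isolated on one side. Multiplying through and rearranging, $\frac{c}{s} \leq 2^{-11} s d^{-1} (2n)^{-2^{9} d s^{-2}}$ is equivalent to
\[
(2n)^{2^{9} d s^{-2}} \;\leq\; 2^{-11}\, s^{2}\, d^{-1}\, c^{-1}.
\]
Raising both sides to the power $s^{2}/(2^{9} d) = 2^{-9} d^{-1} s^{2}$ (a positive exponent, so the inequality direction is preserved) converts this into
\[
2n \;\leq\; \bigl(2^{-11} c^{-1} s^{2} d^{-1}\bigr)^{2^{-9} d^{-1} s^{2}},
\]
i.e.\ $n \leq \tfrac12 \bigl(2^{-11} c^{-1} s^{2} d^{-1}\bigr)^{2^{-9} d^{-1} s^{2}}$. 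This is exactly the hypothesis assumed in the statement of Corollary \ref{constantlist1}, so the chain of equivalences closes and the required bound on $\beta$ holds.

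The one genuine subtlety — and the only place the argument could go wrong — is the direction of the inequality when raising to a fractional power and, relatedly, whether the base $2^{-11} c^{-1} s^{2} d^{-1}$ is at least $1$, since otherwise raising a quantity smaller than $1$ to a small positive exponent behaves delicately and the stated hypothesis would be vacuous or inconsistent. I would check that under the corollary's own hypothesis the base indeed exceeds $1$ (this is forced: the hypothesis asserts the right-hand side is at least $n \geq 1$, and a base below $1$ raised to a positive power stays below $1$, contradicting $\geq n$), so the manipulation is legitimate and all steps are reversible. I expect this monotonicity/consistency check to be the main obstacle, in the sense that it is the only non-mechanical point; once it is settled the corollary is an immediate restatement of Theorem \ref{maintheorem} with $\beta = c/s$.
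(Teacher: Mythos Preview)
Your proposal is correct and is exactly the intended derivation: the paper states this corollary without a separate proof, as an immediate consequence of Theorem~\ref{maintheorem} obtained by setting $\beta = c/s$ and rearranging the bound. (Your worry about the inequality direction when raising to a positive power is unnecessary, since $x \mapsto x^{p}$ is strictly increasing on $(0,\infty)$ for every $p>0$ regardless of whether the base exceeds $1$, so the equivalence is automatic.)
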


If the length of every list in $L$ is bounded by a power of $s$, %(which means $\beta s = s^{c}$ for some constant $c$),
we have a slightly different condition on $d$ and $s$.
\begin{corollary}
\label{constantlist2}
Let $G$ be a $(d,s)$-edge colorable graph $G$ of order $n$. If $s \geq 11$ 
and for some constant $c$ we have $\dfrac{1}{2} (2^{-11}s^{2-c}d^{-1})^{2^{-9}d^{-1}s^{2}} \geq n$,
then any $s^{c-1}$-sparse list assignment $L$ for $G$ is avoidable.
\end{corollary}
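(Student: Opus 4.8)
The plan is to obtain this corollary as an immediate consequence of Theorem~\ref{maintheorem}, in exactly the same way one deduces Corollary~\ref{constantlist1}. The key observation is that an $s^{c-1}$-sparse assignment is precisely one whose lists have length at most $\beta s = s^{c}$, a power of $s$, matching the motivation stated just before the corollary. So I would take the sparsity parameter to be $\beta = s^{c-1}$ and simply check that the displayed bound on $n$ is equivalent to the hypothesis $\beta \le 2^{-11}sd^{-1}(2n)^{-2^{9}ds^{-2}}$ of the main theorem.

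Concretely, I would fix an arbitrary $s^{c-1}$-sparse list assignment $L$ and set $\beta = s^{c-1}$. Then $L$ is $\beta$-sparse, and by Theorem~\ref{maintheorem} it is avoidable as soon as $s^{c-1} \le 2^{-11}sd^{-1}(2n)^{-2^{9}ds^{-2}}$. The rest is a short rearrangement: dividing through by the positive quantity $2^{-11}sd^{-1}$ turns this into $2^{11}s^{c-2}d \le (2n)^{-2^{9}ds^{-2}}$, and then raising both sides to the power $-2^{-9}d^{-1}s^{2}$, the negative reciprocal of the exponent $2^{9}ds^{-2}$, reverses the inequality and isolates $n$ to give $2n \le (2^{-11}s^{2-c}d^{-1})^{2^{-9}d^{-1}s^{2}}$, i.e. the stated condition $\tfrac12(2^{-11}s^{2-c}d^{-1})^{2^{-9}d^{-1}s^{2}} \ge n$. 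Since every step is reversible, the hypothesis on $n$ is equivalent to the bound on $\beta$, and the conclusion follows.

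There is no real obstacle here; the only thing demanding attention is the bookkeeping of exponents. I would be careful to track the sign when taking the reciprocal power, so that the inequality flips in the right direction, and to record that $2^{-9}d^{-1}s^{2}$ is the multiplicative inverse of $2^{9}ds^{-2}$. This is the identical computation to Corollary~\ref{constantlist1}, with the list-length budget $s^{c}$ in place of the constant budget $c$ there, so I would in fact present both corollaries by the same one-line substitution into Theorem~\ref{maintheorem}.
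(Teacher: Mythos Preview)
Your proposal is correct and is exactly the intended derivation: the paper states this corollary without proof, leaving it as an immediate consequence of Theorem~\ref{maintheorem} obtained by substituting $\beta = s^{c-1}$ and rearranging, precisely as you do.
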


Consider an arbitrary list assignment $L'$ (not necessariliy $\beta$-sparse)
for a $(d,s)$-edge colorable graph. In general, it is difficult to determine if $L'$ is avoidable or not. 
However, if the edges with forbidden lists are placed on a distance-$3$ matching, 
our method in fact immediately yields the following.
\begin{theorem}
\label{secondth}
	Let $L'$ be a list assignment for the edges of a $(d,s)$-edge colorable
	graph $G$ such that for each edge $e$ of $G$, $L'(e)\leq s-1$.
	If every edge $e$ satisfying $L'(e) \neq \emptyset$ belongs
	to a distance-$3$ matching in $G$, then $L'$ is avoidable.
\end{theorem}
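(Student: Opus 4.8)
The plan is to begin from a standard coloring $h$ of $G$ with color set $\{1,\dots,d\}$ and to repair only the edges that violate $L'$, using swaps of $2$-colored $4$-cycles. Since every edge with a nonempty list lies on the given distance-$2$ matching $\mathcal{M}$, the only edges $e$ with $h(e)\in L'(e)$ (call them \emph{bad}) belong to $\mathcal{M}$; every other edge has an empty list and is automatically satisfied by any proper coloring. Thus it suffices to recolor the bad edges while keeping the coloring proper and leaving every edge outside a small neighborhood of $\mathcal{M}$ untouched.

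First I would record the available recolorings of a single bad edge. Let $e=uv$ be bad with $h(e)=\alpha$. For a $2$-colored $4$-cycle through $e$ with second color $\gamma$, the two edges joining $e$ to the opposite edge of the cycle both have color $\gamma$, meeting $e$ at $u$ and at $v$ respectively; since $h$ is proper, $\gamma$ determines these two edges, hence the whole cycle. Therefore the $\ge s-1$ $2$-colored $4$-cycles through $e$ carry pairwise distinct second colors $\gamma_1,\dots,\gamma_{s-1}$, and swapping the cycle with second color $\gamma_i$ (interchanging $\alpha$ and $\gamma_i$ along it) recolors $e$ to $\gamma_i$ while preserving properness. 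Consequently $e$ can be assigned any color in the set $\{\alpha,\gamma_1,\dots,\gamma_{s-1}\}$ of size $s$; as $|L'(e)|\le s-1$, at least one such color $c_e$ avoids $L'(e)$, and I fix that target, doing nothing if $c_e=\alpha$ and otherwise performing the corresponding swap.

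The heart of the argument is to perform all these swaps simultaneously without destroying properness. The support of the swap repairing $e$ is its $2$-colored $4$-cycle, which lies in the $1$-neighborhood of $e$. Because $\mathcal{M}$ is a distance-$2$ matching, two such cycles are vertex-disjoint unless their bad edges are exactly at distance $2$; moreover a shared vertex that is not accompanied by a shared edge is harmless, since there the two swaps merely permute the colors already present. The sole obstruction is a pair of chosen cycles sharing their \emph{opposite} edge: I would show that this forces the two bad edges to have the same color under $h$ together with a rigid local configuration, and that it can be removed by re-selecting second colors. Concretely, I would phrase the choice of one repairing cycle per bad edge as choosing a system of distinct opposite edges, noting that each bad edge still offers $\ge s-1$ candidate cycles, and settle it by a Hall-type or greedy selection; verifying that the distance-$2$ condition keeps the competition for any single opposite edge small enough for such a selection to exist is the step I expect to be the main obstacle. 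Once clash-free cycles are chosen, swapping them all at once yields a proper $d$-edge coloring that avoids $L'$.
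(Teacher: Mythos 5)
Your reductions up to the final step are correct, and they follow exactly the route the paper has in mind (the paper in fact offers no written proof of Theorem \ref{secondth}; it only asserts that the $P$-swap method of Lemma \ref{lemma2} ``immediately yields'' it): bad edges lie only on the matching, the at least $s-1$ two-colored $4$-cycles through a bad edge carry pairwise distinct second colors, a single swap repairs a bad edge without creating conflicts elsewhere, shared vertices between chosen cycles are harmless, and the only possible interference between two chosen cycles is a shared opposite edge, which forces the two bad edges to have the same standard color and to lie at distance exactly $2$. The gap is precisely at the step you flag as the expected main obstacle, and it is not a technical hurdle but a genuine impossibility: the candidates for your Hall-type or greedy selection are not the $\ge s-1$ two-colored cycles but only the \emph{allowed} ones, and for a bad edge $e$ (so $h(e)\in L'(e)$ and $|L'(e)|\le s-1$) only $(s-1)-(s-2)=1$ allowed cycle is guaranteed. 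With one candidate per bad edge there is no freedom to re-select, and Hall's condition can actually fail.

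Concretely, take $G=Q_d$, which is $(d,d)$-edge colorable with standard color $j$ on the direction-$j$ edges. Let $e_1=\{0^d,\,0^{d-1}1\}$ and $e_2=\{110^{d-2},\,110^{d-3}1\}$ (binary strings; both direction-$d$ edges, hence both of standard color $d$); they are at distance exactly $2$, so they form a distance-$2$ matching. Put $L'(e_1)=\{2,3,\dots,d\}$ and $L'(e_2)=\{1,3,4,\dots,d\}$, each of size $d-1=s-1$, all other lists empty. The unique allowed cycle for $e_1$ is its cycle in directions $d,1$, the unique allowed cycle for $e_2$ is its cycle in directions $d,2$, and both have the same opposite edge $f=\{10^{d-1},\,10^{d-2}1\}$; thus in your bipartite ``bad edge versus opposite edge'' system we get $N(\{e_1,e_2\})=\{f\}$, and Hall fails with nothing to re-select. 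Sequential swapping fails as well: after swapping the cycle of $e_1$, the edge $f$ has color $1$, the direction-$(d,2)$ cycle of $e_2$ is no longer $2$-colored, and since every $4$-cycle of $Q_d$ is a two-direction cycle, every remaining $2$-colored $4$-cycle through $e_2$ has second color in $L'(e_2)$. So no family of disjoint allowed cycles (and no greedy sequence of single allowed swaps) repairs both edges, even though an avoiding coloring does exist --- one can reach it by a longer sequence of swaps passing through intermediate colorings that still contain conflicts (swap the $(d,1)$-cycle of $e_1$, then the $(d,1)$-cycle of $e_2$, then the $(d,2)$-cycle of $e_2$, which only becomes $2$-colored after the previous swaps). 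Hence the theorem cannot be proved by the mechanism you describe, one disjoint allowed $4$-cycle per bad edge; any correct proof must handle this clash configuration by a different device. Note that the same gap afflicts the paper's own one-line justification; your argument (and the paper's) is complete for distance-$3$ matchings, where the chosen cycles are automatically vertex-disjoint, but not for distance-$2$.
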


The rest of the paper is organized as follows. In Section 2, after introducing some terminology and notation,
we prove Theorem \ref{maintheorem}. Our proof 
relies heavily on the fact that every edge is contained in a large 
number of $2$-colored $4$-cycles. It would be interesting to investigate 
if a similar result holds for graphs containing a certain amount of
$2$-colored $2c$-cycles ($c \in \mathbb{N}$, $c >2$). Section 3 gives some examples of classes of graphs that belong
to the family of $(d,s)$-edge colorable graphs. 

\section{Proof the main theorem}
Given a $(d,s)$-edge colorable graph $G$ of order $n$, let $h$ be a standard coloring of $G$.
For a vertex $u \in G$,
we denote by $E_u$ the set of edges with one endpoint being $u$,
and for a (partial) edge coloring $f$ of $G$, let $f(u)$
denote the set of colors on edges in $E_u$ under $f$.
If two edges $uv$ and $zt$ of $G$ are in a $2$-colored $4$-cycle
in $G$ %, and all the vertices $x,y,z,t$ are distinct,
then the edges $uv$ and $zt$ are {\em parallel}.

Given a proper coloring $h'$ of the edges of $G$, for an edge $e \in G$, 
any edge $e' \in G$ ($e' \neq e$) belongs to at most 
one $2$-colored $4$-cycle containing $e$. This property is obvious if $e'$ 
and $e$ are not adjacent; in the case when they have the same endpoint $u$, 
assume $e=uv$, $e'=uv'$ and $uvv_1v'u$ and $uvv_2v'u$
are two $2$-colored $4$-cycles containing $e$ and $e'$; 
then $h'(vv_1)=h'(vv_2)=h'(uv')$,
a contradiction since $vv_1$ and $vv_2$ are adjacent. 

Consider a $\beta$-sparse list assignment $L$ for $G$ and
a proper edge coloring $\phi$ of $G$.
An edge $e$ of $G$ is called a \textit{conflict edge}
(\textit{of $\phi$ with respect to $L$})  if $\phi(e) \in L(e)$. 
An \textit{allowed cycle} 
(\textit{under $\phi$ with respect to $L$}) of $G$ is a $4$-cycle
$\mathcal{C}=uvztu$ in $G$ that is $2$-colored under $\phi$, 
and such that interchanging colors on $\mathcal{C}$ yields a proper 
$d$-edge coloring $\phi'$ of $G$ where none of $uv$, $vz$, 
$zt$, $tu$ is a conflict edge. We call such an interchange {\em a swap in $\phi$}.

We shall establish that our main theorem holds by proving two lemmas.
In the following, $G$ is a $(d,s)$-edge colorable graph of order $n$,
$L$ is a $\beta$-sparse list assignment for $G$, and
$h$ is a standard coloring of $G$.
For simplicity of notation, we shall omit floor and ceiling signs whenever these are not crucial.
\begin{lemma}
\label{lemma1}
Let $0<\gamma, \tau<1$ be parameters such that $\beta \leq \gamma$ and
         \begin{equation}
	\label{e1}
	n\big(\dfrac{e \beta}{\gamma}\big)^{\gamma s} 
 	+\dfrac{nd}{2}\big(\dfrac{2e \beta}{\tau-2\beta}\big)^{(\tau-2\beta) s} < 1
	 \end{equation}
        There is a permutation $\rho$ of $\{1,\dots,d\}$, such that applying $\rho$ to
	the set of colors $\{1,\dots,d\}$ used in $h$, we obtain a proper $d$-edge coloring 
	$h'$ of $G$ satisfying the following:
        \begin{itemize}
	\item[(a)]  For every $6$-neighborhood $W$, and every standard matching $M$, 
	at most $\gamma s$ edges of $M \cap E(W)$ are conflict.
		
	\item[(b)] No vertex $u$ in $G$ satisfies that $E_u$ contains 
	more than $\gamma s$ conflict edges. 
	
	\item[(c)] Each edge in $G$ belongs to at least $(1-\tau)s$ 
	allowed cycles.
\end{itemize}
\end{lemma}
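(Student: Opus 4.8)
The plan is to use the probabilistic method: choose the permutation $\rho$ uniformly at random among all $d!$ permutations of $\{1,\dots,d\}$ and show that with positive probability the coloring $h'=\rho\circ h$ satisfies (a)--(c) simultaneously. Two observations drive everything. First, since $\rho$ merely relabels the colors of the proper coloring $h$, the coloring $h'$ is again proper, and a $4$-cycle is $2$-colored under $h'$ exactly when it is under $h$; thus the whole system of $2$-colored $4$-cycles, in particular the guarantee of $s-1$ of them through each edge, is preserved. Second, an edge $e$ is conflict under $h'$ precisely when $\rho(h(e))\in L(e)$, so each bad event below is a statement about where $\rho$ sends a few fixed colors.

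Condition (a) requires no randomness. A standard matching $M$ is monochromatic under $h$, say of color $c$, so every edge of $M$ has color $\rho(c)$ under $h'$; hence the conflict edges of $M\cap E(W)$ are exactly those $e\in M\cap E(W)$ with $\rho(c)\in L(e)$, whose number equals the number of times the single color $\rho(c)$ occurs in the lists of $M\cap E(W)$. By sparsity condition (iii) this is at most $\beta s\le\gamma s$, and it holds for every $\rho$. So (a) is automatic, and only (b) and (c) need a probabilistic argument, matching the two summands of (\ref{e1}).

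For (b), fix a vertex $u$ with incident edges $e_1,\dots,e_d$ where $h(e_i)=i$. The number of conflict edges at $u$ is $X_u=\sum_{i=1}^d\mathbf{1}[\rho(i)\in L(e_i)]$, and double counting together with (i) and (ii) gives $\mathbb{E}[X_u]=d^{-1}\sum_i|L(e_i)|\le\beta s$. These indicators depend on disjoint rows of the random permutation, so they are negatively associated and obey the same upper-tail Chernoff bound as independent indicators: $\Pr[X_u\ge\gamma s]\le(e\beta/\gamma)^{\gamma s}$; a union bound over the $n$ vertices gives the first term. For (c), fix an edge $e=uv$ with $a:=\rho(h(e))$. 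Its $2$-colored $4$-cycles are indexed by their second $h$-color, and by the uniqueness property in the excerpt the partner edges are distinct; a cycle $\mathcal{C}_j$ consists of $e$ and a parallel edge $z_jt_j$ (both receiving color $a$ after a swap) together with $vz_j$ and $ut_j$ (both receiving color $b_j:=\rho(c_2^{(j)})$, where the $c_2^{(j)}$ are distinct). Such a cycle fails to be allowed only if one of its four edges becomes conflict, and I split these failures in two. Failures where $vz_j$ or $ut_j$ receives color $a$ are deterministic: for the fixed $a$, sparsity (ii) at $v$ (resp. $u$) bounds by $\beta s$ the edges of $E_v$ (resp. $E_u$) whose list contains $a$, and the $vz_j$ (resp. $ut_j$) are distinct, so these total at most $2\beta s$. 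The remaining failures, where $e$ or $z_jt_j$ receives $b_j$, are bounded by $R_e=\sum_j\mathbf{1}[\rho(c_2^{(j)})\in L(e)\cup L(z_jt_j)]$; as the colors $c_2^{(j)}$ are distinct, this is again a sum of negatively associated indicators, and by (i) with at most $d-1$ cycles, $\mathbb{E}[R_e]=d^{-1}\sum_j|L(e)\cup L(z_jt_j)|<2\beta s$. Thus at most $2\beta s+R_e$ cycles are non-allowed, so it suffices that $R_e<(\tau-2\beta)s$, and Chernoff gives $\Pr[R_e\ge(\tau-2\beta)s]\le(2e\beta/(\tau-2\beta))^{(\tau-2\beta)s}$; a union bound over the $nd/2$ edges gives the second term.

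Putting the pieces together, the probability that some event violating (b) or (c) occurs is at most the left-hand side of (\ref{e1}), which is $<1$; since (a) holds for every $\rho$, a permutation for which all three conclusions hold exists, proving the lemma. The main obstacle is the concentration step: the relevant sums are functions of a single random permutation, so the indicators are dependent and one must justify the independent-like Chernoff bounds. This is handled by the negative association of permutation indicators, which forces the bookkeeping in (c) so that the genuinely random part $R_e$ is a sum over distinct permutation coordinates; the deterministic extraction of the $vz_j,ut_j$ failures, which accounts both for the shift from $\tau$ to $\tau-2\beta$ and for the mean $2\beta s$, is precisely what makes this possible.
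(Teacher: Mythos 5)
Your proposal is correct and follows essentially the same route as the paper: the paper also draws the permutation uniformly at random (phrased as counting permutations), observes that condition (a) holds for \emph{every} permutation by sparsity condition (iii), and treats (b) and (c) with exactly your decomposition, including the split of the non-allowed cycles at an edge into at most $2\beta s$ failures caused by the color of the edge's own standard matching and at least $(\tau-2\beta)s$ failures caused by the (distinct) second colors. The only difference is the concentration bookkeeping: where you invoke negative association of permutation indicators plus a Chernoff bound, the paper counts bad permutations directly (choose the conflict positions, then one of at most $\beta s$ or $2\beta s$ forbidden colors for each, then permute the remaining colors arbitrarily) and applies Stirling's bound $x!\ge x^x e^{-x}$, arriving at precisely the same two terms of (\ref{e1}).
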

\begin{proof}
	Let $A$, $B$, $C$ be the number of permutations which do not 
	fulfill the conditions $(a)$, $(b)$, $(c)$, respectively. 
	Let $X$ be the number of permutations satisfying the three conditions 
	$(a)$, $(b)$, $(c)$. There are $d!$ ways to permute the colors, so we have
	$$X \geq d! - A- B - C$$
	We will now prove that $X$ is greater than $0$.
	
	\begin{itemize}
	\item Since all edges that are in the same standard matching
	have the same color under $h$ and 
	for every $6$-neighborhood $W$, and every 
	standard matching $M$, any color appears at most $\beta s$
	times in lists of edges of $M$ contained in $W$, 
	we have that the maximum number of conflict edges in 
	a subset of a given standard matching contained in a $6$-neighborhood
	is $\beta s$. %that is, no standard matchingin $N_{\omega}$ contains 
	%more than $\gamma d$ conflict edges
	Since $\gamma \geq \beta$, this means that
	all permutations satisfy condition $(a)$ or $A=0$.
		
	\item  To estimate $B$, let $u$ be a fixed vertex of $G$, and
	let $P$ be a set of size $\gamma s$ ($|P|=\gamma s$) 
	of edges from $E_u$.
	For a vertex $v$ adjacent to $u$, if $uv$ is a conflict edge,
	then the colors used in $h$
	should be permuted in such a way that in the resulting coloring $h'$,
	the color of $uv$ is in $L(uv)$.
	Since $|L(uv)| \leq \beta s$, there are at most $(\beta s)^{\gamma s}$
	ways to choose which colors from $\{1,2,\dots,d\}$
	to assign to the edges in $P$ so that all
	edges in $P$ are conflict. The rest of the colors can be arranged in any 
	of the $(d - \gamma s)!$ possible ways. In total this gives at most
	$${d \choose \gamma s}(\beta s)^{\gamma s}(d-\gamma s)! 
	= \dfrac{d!(\beta s)^{\gamma s}}{(\gamma s)!}$$
	permutations that do not satisfy condition $(b)$ on vertex $u$.
	There are $n$ vertices in $G$, so we have
	$$B \leq n \dfrac{d!(\beta s)^{\gamma s}}{(\gamma s)!}$$
	
	\item To estimate $C$, let $uv$ be a fixed edge of $G$. 
	Each $2$-colored $4$-cycle $\mathcal{C} = uvztu$ 
	containing $uv$ is uniquely defined by an edge 
	$zt$ which is parallel with $uv$. Moreover, a permutation $\varsigma$ 
	contributes to $C$ if and only if there are at least $\tau s$ choices 
	for $zt$ so that %the swap along
	$\mathcal{C}$ is not allowed. We shall count the number of 
	ways $\varsigma$ could be 
	constructed for this to happen. First, note that for each 
	choice of a color $c_1$ from $\{1,\dots, d\}$, for the
	standard matching which contains $uv$, there are up 
	to $2\beta s$ cycles that
	are not allowed because of this choice. This follows from the fact
	that there are at most $\beta s$ choices
	for $t$ (or $z$) such that $L(ut)$ (or $L(vz)$) contains $c_1$. 
	So for a permutation 
	$\varsigma$ to contribute to $C$, $\varsigma$ must satisfy that 
	at least $(\tau-2\beta)s$ cycles
	containing $uv$ are forbidden because of 
	the color assigned to the standard matching containing $ut$ and
	$vz$.
	%the restriction of color 
	%on $ut$ and $vz$.
	
	Let $S$ be a set of edges, $|S|=(\tau-2\beta) s$, such that for
	every edge $zt \in S$,
	the $2$-colored $4$-cycle $\mathcal{C}= uvztu$ is not allowed because
	of the color assigned to $ut$ and $vz$. 
	There are ${s-1} \choose {(\tau - 2\beta)s}$ ways to choose $S$.
	Furthermore, $L(uv)$ and $L(zt)$ contain at most $\beta s$ colors
	each, so there are at most
	$2\beta s$ choices for a color for the standard matching
	containing $ut$ and $vz$ 
	that would make $\mathcal{C}$ disallowed because
	of the color assigned to this standard matching.
	The remaining colors can be permuted in 
	$(d-1-(\tau-2\beta)s)!$ ways. 
	Thus, the total number of permutations $\sigma$ with not 
	enough allowed cycles for 
	a given edge is bounded from above by
	$$d{{s-1} \choose {(\tau - 2\beta)s}}{(2\beta s)^{(\tau - 2\beta)s}}
	(d-1-(\tau-2\beta)s)!$$
	
	Since $s \leq d$, this number is at most
	$$d{{d-1} \choose {(\tau - 2\beta)s}}{(2\beta s)^{(\tau - 2\beta)s}}
	(d-1-(\tau-2\beta)s)! $$
	Notice that the $d$-regular graph $G$ of order $n$ has $\dfrac{nd}{2}$ edges, so
	the total number of permutations $\sigma$ that have 
	too few allowed cycles for at 
	least one edge is bounded from above by
	$$C \leq \dfrac{nd}{2} d{{d-1} \choose {(\tau - 2\beta)s}}{(2\beta s)^{(\tau - 2\beta)s}}(d-1-(\tau-2\beta)s)! 
	= \dfrac{nd}{2}\dfrac{d!{(2\beta s)^{(\tau - 2\beta)s}}}{((\tau - 2\beta)s)!}$$
	\end{itemize}

	Hence,
	$$X \geq d! - n \dfrac{d!(\beta s)^{\gamma s}}{(\gamma s)!}
	- \dfrac{nd}{2}\dfrac{d!{(2\beta s)^{(\tau - 2\beta)s}}}{((\tau - 2\beta)s)!}$$
	
	Using Stirling's approximation $x! \geq x^xe^{-x}$, we have
	$$X \geq d!\Big(1 - n \dfrac{e^{\gamma s}(\beta s)^{\gamma s}}{(\gamma s)^{\gamma s}}
	- \dfrac{nd}{2}\dfrac{e^{(\tau - 2\beta)s}{(2\beta s)^{(\tau - 2\beta)s}}}
	{((\tau - 2\beta)s)^{(\tau - 2\beta)s}}\Big)$$
	$$X \geq d!\Big(1 - n\big(\dfrac{e \beta}{\gamma}\big)^{\gamma s}
	- \dfrac{nd}{2}\big(\dfrac{2e \beta}{\tau-2\beta}\big)^{(\tau-2\beta) s} \Big)$$
	
	Using the conditions 
	$$n\big(\dfrac{e \beta}{\gamma}\big)^{\gamma s} 
	 +\dfrac{nd}{2}\big(\dfrac{2e \beta}{\tau-2\beta}\big)^{(\tau-2\beta) s} < 1$$	
	we now deduce that $X>0$.

	\end{proof}

\begin{lemma}
\label{lemma2}
Let $h'$ be the proper $d$-edge coloring satisfying conditions $(a), (b), (c)$ of Lemma \ref{lemma1} and
$0<\gamma, \tau, \epsilon<1$ be parameters such that 
 \begin{equation}
\label{e2}
s -\tau s - 9\gamma s -  3\epsilon s - \dfrac{20\gamma}{\epsilon}d - 3 > 0
 \end{equation}
By performing a sequence of swaps on disjoint allowed $2$-colored $4$-cycles in $h'$, 
we obtain a proper $d$-edge coloring $h''$ of $G$ which avoids $L$.
\end{lemma}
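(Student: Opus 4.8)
The plan is to reduce the lemma to a purely combinatorial selection problem: I will construct a family $\mathcal{S}$ of pairwise edge-disjoint allowed cycles (in the fixed coloring $h'$) such that every conflict edge of $h'$ is an edge of exactly one cycle of $\mathcal{S}$, and then perform all the corresponding swaps simultaneously. To see that this yields the desired $h''$, note first that swapping a single allowed cycle $\mathcal{C}=uvztu$ replaces, at each of its four vertices, the two colours it contributes by the same pair in the opposite order; hence the set of colours at every vertex is preserved, and since the cycles of $\mathcal{S}$ are edge-disjoint (so two cycles meeting at a vertex use four distinct edges there), the simultaneous swap keeps the colouring proper. Moreover, because the cycles are edge-disjoint, each edge lies in at most one swapped cycle, so its final colour is exactly the colour it receives from that single swap; the definition of an allowed cycle then guarantees that none of the four edges of a swapped cycle is a conflict edge afterwards, while every edge outside $\bigcup\mathcal{S}$ keeps its (non-conflict) colour. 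As every conflict edge is covered by $\mathcal{S}$, the resulting $h''$ avoids $L$.

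To build $\mathcal{S}$ I would process the conflict edges one at a time, maintaining the invariant that the chosen cycles are edge-disjoint. When the current conflict edge $e=uv$ already lies on a previously chosen cycle it is automatically repaired, so I skip it; otherwise I must select an allowed cycle with $e$ as one of its edges that is edge-disjoint from all previously chosen cycles. Writing $h'(uv)=a$, each allowed cycle through $e$ is determined by its second colour $b$, which fixes $t$ (the $b$-neighbour of $u$), $z$ (the $b$-neighbour of $v$) and the parallel edge $zt$; by condition (c) at least $(1-\tau)s$ values of $b$ give an allowed cycle. It therefore suffices to show that fewer than $(1-\tau)s$ of these candidate colours are \emph{blocked}, where $b$ is blocked if one of the edges $ut$, $vz$, $zt$ either already belongs to a chosen cycle or is itself an as-yet-unrepaired conflict edge.

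The heart of the argument is bounding the number of blocked colours, and this is where conditions (a), (b) and the new parameter $\epsilon$ enter. The colours blocked because $ut$ or $vz$ is a conflict edge or a used edge at $u$ or $v$ are controlled by condition (b) (at most $\gamma s$ conflict edges per vertex) together with a bound on how many chosen cycle-edges are incident to $u$ or $v$; the colours blocked through the parallel edge $zt$ are controlled by locality, since any chosen cycle meeting $\mathcal{C}$ comes from a conflict edge lying in a bounded neighbourhood of $e$, so condition (a) caps the number of interfering conflict edges on each standard matching inside the relevant $6$-neighbourhood. To keep the congestion from chosen cycles small I would introduce a saturation threshold: call a vertex \emph{saturated} once $\epsilon s$ of its incident edges have been used, route new cycles so as to avoid saturated $z$ and $t$, and observe that the total number of used edges in the neighbourhood is $O(\gamma s d)$ (summing condition (a) over the $d$ standard matchings), so the number of saturated vertices there is $O(\gamma d/\epsilon)$. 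Collecting these estimates bounds the blocked colours by $9\gamma s+3\epsilon s+\frac{20\gamma}{\epsilon}d+3$, and comparing with the $(1-\tau)s$ available cycles yields exactly inequality (\ref{e2}) as the condition that a free cycle always exists; the greedy process then terminates with the required $\mathcal{S}$.

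The main obstacle I anticipate is precisely this congestion control: a priori many conflict edges could cluster near $e$ and force their repairing cycles through the same few vertices, exhausting all candidate colours. The saturation threshold is the device that prevents this, but it must be balanced, since too small an $\epsilon$ makes too many vertices saturated (inflating the $\frac{20\gamma}{\epsilon}d$ term) while too large an $\epsilon$ inflates the $3\epsilon s$ term; making the locality reduction to a $6$-neighbourhood interact cleanly with the per-matching bound of condition (a) is the delicate, error-prone part of the proof.
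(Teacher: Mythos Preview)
Your proposal is correct and follows essentially the same route as the paper's proof: a greedy construction of a set of edge-disjoint allowed cycles (the paper calls it $P$), with an $\epsilon s$ saturation threshold to control congestion, the $O(\gamma s d)$ local bound on used edges obtained from condition~(a) via the $6$-neighbourhood, and the same final count $9\gamma s+3\epsilon s+\frac{20\gamma}{\epsilon}d+3$. The only point to make explicit when you write it out is that the paper tracks \emph{two} kinds of overload, not one: besides saturated vertices $z,t$ it also declares a standard matching $P$-overloaded in the relevant neighbourhood once $\epsilon s$ of its edges there are used, and avoids routing the pair $ut,vz$ through such a matching; this is what yields the $\frac{4\gamma}{\epsilon}d$ contribution inside the $\frac{20\gamma}{\epsilon}d$ term and also what caps the number of used edges on the standard matching containing $zt$, which you correctly flagged as the delicate step.
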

\begin{proof}
For constructing $h''$ from $h'$, we will perform a number of swaps on $G$,
and we shall refer to this procedure as \textit{P-swap}. We are going to construct a set $P$ 
of disjoint allowed $2$-colored $4$-cycles such that each conflict of $h'$ with $L$ belongs
to one of them. An edge that belongs to a $2$-colored $4$-cycle 
in $P$ is called \textit{used} in $P$-swap. 
Suppose we have included a $4$-cycle $\mathcal{C}$ in $P$. Since for every $6$-neighborhood $W$ 
in $G$, and every standard matching $M$, the number of conflict edges in 
$M \cap E(W)$ is not greater than $\gamma s$, 
for every $5$-neighborhood $W$ in $G$,
the total number of edges in $W$ that are
used in $P$-swap is at most $4\gamma d s$.
A vertex $u$ in $G$ is {\em $P$-overloaded} if $E_u$ contains at least $\epsilon s$
edges that are used in $P$-swap; note that each used edge is incident to two vertices, thus no more than
$\dfrac{2 \times 4\gamma d s}{\epsilon s}=\dfrac{8\gamma}{\epsilon}d$ vertices of each $4$-neighborhood 
are $P$-overloaded. A standard matching $M$ in $G$ is {\em $P$-overloaded} in a 
$t$-neighborhood $W$ if $M \cap E(W)$ contains at least $\epsilon s$ edges that are used in $P$-swap; 
note that for each $5$-neighborhood $W$, no more than $\dfrac{4\gamma}{\epsilon}d$ standard matchings
of $G$ are $P$-overloaded in $W$.

Using these facts, let us now construct our set $P$ by steps; at each step
we consider a conflict edge $e$ and include an allowed $2$-colored $4$-cycle 
containing $e$ in $P$. Initially, the set $P$ is empty. Next, for each conflict edge 
$e=uv$ in $G$, there are at least $s- \tau s$ allowed cycles %in the 
containing $e$. We choose an allowed cycle $uvztu$ which contains $e$ and satisfies the following:
\begin{itemize}
	\item[(1)] $z$ and $t$ and the standard matching that 
	contains $vz$ and $ut$ 
	are not $P$-overloaded in the $4$-neighborhood $W_e$
	of $e$; this eliminates at most $\dfrac{2 \times 8\gamma}{\epsilon}d + \dfrac{4\gamma}{\epsilon}d=\dfrac{20\gamma}{\epsilon}d$ choices.
        Note that with this strategy for including 
	$4$-cycles in $P$, after completing the
	construction of $P$, every vertex is incident 
	with at most $2\gamma s+(\epsilon s -1) +2 = 2\gamma s+\epsilon s +1$
	edges that are used in $P$-swap. Furthermore, after we have 
	constructed the set $P$, no standard matching 
	contains more than $2\gamma s+\epsilon s+1$ edges that are used in $S$-swap
	in a $1$-neighborhood of $G$ ; this
	follows from the fact that every $1$-neighborhood $W'$ in $G$  that $ut$, $vz$ or $zt$
	belongs to is contained in $W_e$. 
	
	\item[(2)] None of the edges $vz$, $zt$, $ut$ are 
	conflict, or used before in $P$-swap. 
	
	All possible choices for these edges
	are in the $1$-neighborhood $W_e$ of $e$ in $G$ .
	Since no vertex  in $W_e$ or 
	subset of a standard matching that is in $W_e$
	contains more than $\gamma s$
	conflict edges and $P$-swap
	uses at most $2\gamma s+\epsilon s +1$ edges
	at each vertex and in each subset of a standard matching
	contained in $W_e$, these restrictions
	eliminate at most 
	$3 \gamma s + 3(2 \gamma s+\epsilon s +1)$ 
	or $9 \gamma s + 3\epsilon s+3$  choices.
\end{itemize}
	It follows that we have at least
	$$s -\tau s - 9\gamma s -  3\epsilon s - \dfrac{20\gamma}{\epsilon}d - 3$$
	choices for an allowed cycle $uvztu$ which contains $uv$.
	By assumption, this expression is greater than zero,
	so we conclude that there is a cycle satisfying these conditions,
	and thus we may construct the set $P$ by iteratively adding disjoint allowed
	$2$-colored $4$-cycles such that each cycle contains a conflict edge.
	After this process terminates we have a set $P$ of disjoint 
	allowed cycles; we swap on all the cycles in $P$ to obtain the coloring $h''$ which avoids $L$.
\end{proof}

We now complete the proof of the main Theorem \ref{maintheorem}:
\begin{proof} 
By assumption, we have $\beta \leq 2^{-11}sd^{-1} (2n)^{-2^9ds^{-2}}$. 
Since $s \leq d$ and $(2n)^{-2^9ds^{-2}} \leq 1$, it follows that $\beta \leq 2^{-11} sd^{-1} \leq 2^{-11}$.
Let $\gamma=2^{-9}sd^{-1}$; then $\beta \leq \gamma$ and
\begin{equation}
\label{e11}
n\big(\dfrac{e \beta}{\gamma}\big)^{\gamma s} 
< n \big(\dfrac{2^2 2^{-11}sd^{-1} (2n)^{-2^{9}ds^{-2}}}{2^{-9}sd^{-1}}\big)^{2^{-9}sd^{-1} s}=n(2n)^{-1}=\dfrac{1}{2}.
\end{equation}
Let $\tau=2^{-7}$, then $\tau -2\beta \geq 2^{-7} -2.2^{-11}>2^{-8}$ and
$$\dfrac{\tau -2\beta}{2e\beta}>\dfrac{2^{-8}}{2e2^{-11}sd^{-1} (2n)^{-2^9ds^{-2}}}
>\dfrac{1}{(2n)^{-2^9ds^{-2}}}
>(2n)^{2^9ds^{-2}}>1$$
%\dfrac{2^{-6} -2.2^{-11}}{2e2^{-11}(nd)^{-2^7s^{-1}}>\dfrac{2^{-8}}{2.2^2.2^{-11}(nd)^{-2^7s^{-1}}$. 
This implies
$$\big(\dfrac{\tau -2\beta}{2e\beta}\big)^{(\tau-2\beta) s}>\big(\dfrac{\tau -2\beta}{2e\beta}\big)^{2^{-8}s}
>\big((2n)^{2^9ds^{-2}}\big)^{2^{-8}s}=(2n)^{2ds^{-1}}>(2n)^2$$
Using the fact that $d <n$, we have
\begin{equation}
\label{e12}
\dfrac{nd}{2}\big(\dfrac{2e \beta}{\tau-2\beta}\big)^{(\tau-2\beta) s} 
<\dfrac{nd}{2} \dfrac{1}{(2n)^2} < \dfrac{1}{8}.
\end{equation}
Combining (\ref{e11}) and (\ref{e12}), we obtain
$n\big(\dfrac{e \beta}{\gamma}\big)^{\gamma s} +\dfrac{nd}{2}\big(\dfrac{2e \beta}{\tau-2\beta}\big)^{(\tau-2\beta) s} < 1$.
Since the values of $\gamma, \tau$ satisfy the conditions in Lemma \ref{lemma1}, there is a permutation $\rho$ of the colors
in the standard $d$-edge coloring $h$ of %the $(d,s)$-edge colorable graph 
$G$ from which we obtain a proper $d$-edge coloring $h'$ of $G$ satisfying 
the conditions $(a), (b), (c)$ in Lemma \ref{lemma1}. %by a permutation $\rho$.
 Furthermore, since $d\geq s \geq 11$, if we let $\epsilon=2^{-3}$, then we have
$$s -\tau s - 9\gamma s -  3\epsilon s - \dfrac{20\gamma}{\epsilon}d - 3>0.$$
It follows that Lemma \ref{lemma2} yields a proper $d$-edge coloring $h''$ of $G$ which avoids $L$.
%$$s -\tau s - 9\gamma s -  3\epsilon s - \dfrac{12\gamma}{\epsilon}d - 3
%>s(1-2^{-6}-9.2^{-8}s.d^{-1} -3.2^{-3} - \dfrac{12.2^{-8}}{2^-3})-3
%>5(1-2^{-6}-9.2^{-8} -3.2^{-3} - \dfrac{12.2^{-8}}{2^-3})-3>1$$
\end{proof}
% since $(2n)^{-2^8ds^{-2}}<1$ then
%$$\beta \leq 2^{-11}sd^{-1} (2n)^{-2^8ds^{-2}} <2^{-8}sd^{-1}=\gamma.$$
%s-9\gamma s>0 then \gamma<1/9, 3\epsilon<1; s>12\gamma d/\epsilon then 1/3>\epsilon>12\gamma d\epsilon
%Hence, \gamma<s/{36d}; e\beta/gamma<1 then beta<2^-2\gamma<2^-2.2^-6s/d
%But when we consider the formula in lemma 2, we should choose \gamma=2^-8.s/d (since we need 3\epsilon+12\gamma\epsilon>1

%Recall that the counterexample in \cite{JohanQd} that yields an upper bound on the value of $\beta$ in Theorem {maintheorem},
%($\beta \leq \dfrac{1}{2}$) makes us believe that the value we have found for $\beta$ are quite small and far from the optimal one.

\section{Families of $(d,s)$-edge colorable graphs}
The results in \cite{Lina} and \cite{JohanQd} allow us to conclude that
the complete bipartite graph $K_{d,d}$ ($d=2^t, t \in \mathbb{N}$) and 
the hypercube graph $Q_d$ ($d \in \mathbb{N}$) are $(d,d)$-edge colorable graphs.
Compared to the hypercube graph
$Q_d$ ($d \in \mathbb{N}$), the complete bipartite graph $K_{d,d}$ ($d=2^t, t \in \mathbb{N}$) is much denser, so it is interesting 
to see how this complete bipartite graph behaves if some edges are removed.
Lemma \ref{Completedelete} considers the case when we take the set of
$k$ standard matchings out of $K_{d,d}$ ($d=2^t, t \in \mathbb{N}$).

\begin{lemma}
\label{Completedelete}
The graph $G$ obtained by removing $k$ standard matchings from
the complete bipartite graph $K_{d,d}$ $(d=2^t, t \in \mathbb{N})$ 
is a $(d-k, d-k)$-edge colorable graphs.
\end{lemma}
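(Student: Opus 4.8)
The plan is to exhibit an explicit standard coloring of $K_{d,d}$ built from the group structure of $\mathbb{F}_2^t$, and then to show that deleting color classes removes exactly the $2$-colored $4$-cycles through each edge that use a deleted color, leaving enough of them. Since $d=2^t$, I would identify both vertex classes of $K_{d,d}$ with $\mathbb{F}_2^t$ and color the edge $xy$ (with $x$ on the left, $y$ on the right) by $h(xy)=x+y\in\mathbb{F}_2^t$, where $+$ is addition in $\mathbb{F}_2^t$. For a fixed left vertex $x$ the map $y\mapsto x+y$ is a bijection, and similarly for fixed $y$, so $h$ is a proper $d$-edge coloring; its color class for a color $c$ is the perfect matching $\{xy:x+y=c\}$, which is precisely a standard matching. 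I would take this $h$ as the standard coloring witnessing that $K_{d,d}$ is $(d,d)$-edge colorable.

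Next I would characterize the $2$-colored $4$-cycles. A $4$-cycle on left vertices $x_1,x_2$ and right vertices $y_1,y_2$ is $2$-colored under $h$ if and only if $x_1+x_2=y_1+y_2$; when this holds it uses exactly the two colors $a=x_1+y_1$ and $b=x_2+y_1=a+(x_1+x_2)$. Conversely, through a fixed edge $e=x_1y_1$ of color $a$, each color $b\neq a$ yields a unique such cycle: put $\delta=a+b\neq 0$ and $x_2=x_1+\delta$, $y_2=y_1+\delta$; using $2\delta=0$ in $\mathbb{F}_2^t$ one checks the cycle $x_1y_1x_2y_2$ has colors $a,b,a,b$. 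Hence every edge lies in exactly $d-1$ distinct $2$-colored $4$-cycles, which re-derives the $(d,d)$-edge colorability of $K_{d,d}$ and, crucially, records that the $2$-colored $4$-cycle through a fixed edge is determined by its second color.

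Now I would delete $k$ standard matchings, that is, the color classes for the colors in a set $S\subseteq\mathbb{F}_2^t$ with $|S|=k$, and let $G$ be the resulting graph. Only edges are removed, so $G$ is $(d-k)$-regular, and the restriction of $h$ to $G$ is a proper edge coloring using exactly the $d-k$ colors of $\mathbb{F}_2^t\setminus S$; this restriction is the candidate standard coloring of $G$. For an edge $e$ of $G$ with color $a\notin S$, the $2$-colored $4$-cycle through $e$ with second color $b$ survives in $G$ precisely when both of its colors lie outside $S$, i.e.\ (as $a\notin S$) exactly when $b\notin S$. The number of admissible $b$ is therefore $(d-k)-1$, so every edge of $G$ lies in exactly $(d-k)-1$ $2$-colored $4$-cycles, and $G$ is $(d-k,d-k)$-edge colorable.

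The argument is short because the group coloring makes the incidence between edges, colors, and $2$-colored $4$-cycles completely transparent, so I expect no serious obstacle. The only points needing care are confirming that deleting a color class removes \emph{both} color-$c$ edges of every cycle using color $c$ (and no other cycle), so that the surviving count is an exact $(d-k)-1$ rather than merely a lower bound, and noting that the vertex set is untouched so the four vertices of each surviving cycle remain present. It is also worth remarking that the choice $d=2^t$ is used precisely to guarantee $2\delta=0$, which is what makes all $d-1$ cycles $2$-colored; over a group with elements of larger order this count would drop.
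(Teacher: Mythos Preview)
Your proof is correct and follows essentially the same approach as the paper: restrict the standard coloring of $K_{d,d}$ to the remaining edges and count, for each surviving edge, how many of its $d-1$ $2$-colored $4$-cycles are destroyed by the deleted matchings. The only difference is that the paper treats the standard coloring of $K_{d,d}$ (and the fact that every edge lies in exactly $d-1$ such cycles, one per second color) as known from the cited references, whereas you construct it explicitly via the $\mathbb{F}_2^t$ addition table and derive the color--cycle bijection from scratch; this makes your argument self-contained but does not change its substance.
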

\begin{proof}
It is straightforward that $G$ is a $(d-k)$-regular graph. Let $h$ be a standard coloring of
$K_{d,d}$ such that all edges in a standard matching of $K_{d,d}$ receive the same color in $h$.
We define the proper $(d-k)$-edge coloring $h'$ of $G$ from $h$
by retaining the color of every non-deleted edge in $E(G)$.
Consider an edge $e$ of $G$, %both vertices $u$ and $v$ have degree $(d-k)$ in $G$.
removing one standard matching from $K_{d,d}$ eliminates one 
$2$-colored $4$-cycle that contains $e$. Hence, the number of $2$-colored $4$-cycles containing $e$ 
is $d-1-k$, this implies $G$ is a $(d-k, d-k)$-edge colorable graphs.
\end{proof}

Recall that the Cartesian product $G=G_1 \square \, G_2$ 
of the graphs $G_1$ and $G_2$ is a graph whose
vertex set %of $G=G_1 \square \, G_2$ 
is the Cartesian product $V(G_1) \times V(G_2)$
and where two vertices $u=(u_1,u_2)$ and $v=(v_1,v_2)$ are adjacent in $G$ 
whenever $u_1=v_1$ and $u_2$ is adjacent with $v_2$ in $G_2$, or $u_2=v_2$ and $u_1$ is adjacent with $v_1$ in $G_1$.
The following lemma yields $(d,s)$-edge colorable graphs by taking
Cartesian product of graphs.

\begin{lemma}
\label{Cartesianproduct}
Let $G_1$ be a $(d_1,s_1)$-edge colorable graph and $G_2$ be a $(d_2,s_2)$-edge colorable graph.
Then the Cartesian product $G=G_1 \square \, G_2$ of graphs $G_1$ and $G_2$ is a $(d, s)$-edge colorable graph
with $d=d_1+d_2$ and $s=\min \{d_1 +s_2 ,d_2+s_1\}$.
\end{lemma}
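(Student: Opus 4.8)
The plan is to build the standard coloring of the product directly from the standard colorings of the two factors, and then to count two essentially different families of $2$-colored $4$-cycles through each edge. First I would record that the Cartesian product of a $d_1$-regular graph with a $d_2$-regular graph is $(d_1+d_2)$-regular, so $d=d_1+d_2$ is forced. Every edge of $G$ is of exactly one of two types: a \emph{$G_1$-edge} $(u_1,w)(v_1,w)$ with $u_1v_1\in E(G_1)$ and $w\in V(G_2)$ fixed, or a \emph{$G_2$-edge} $(w,u_2)(w,v_2)$ with $u_2v_2\in E(G_2)$ and $w\in V(G_1)$ fixed. Let $h_1$ and $h_2$ be standard colorings of $G_1$ and $G_2$ on \emph{disjoint} color sets $C_1=\{1,\dots,d_1\}$ and $C_2=\{d_1+1,\dots,d_1+d_2\}$. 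Define $h$ on $G$ by $h((u_1,w)(v_1,w))=h_1(u_1v_1)$ for each $G_1$-edge and $h((w,u_2)(w,v_2))=h_2(u_2v_2)$ for each $G_2$-edge. At a vertex $(x,y)$ the incident $G_1$-edges are colored by the proper coloring $h_1$ at $x$ and the incident $G_2$-edges by the proper coloring $h_2$ at $y$; since $C_1\cap C_2=\emptyset$ no clash can occur between the two types, so $h$ is a proper $(d_1+d_2)$-edge coloring.

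Next I would fix a $G_1$-edge $e=(u_1,w)(v_1,w)$ and exhibit two disjoint families of $2$-colored $4$-cycles through it. For each of the $d_2$ neighbors $w'$ of $w$ in $G_2$, the \emph{mixed} cycle $(u_1,w)(v_1,w)(v_1,w')(u_1,w')(u_1,w)$ alternates the two colors $h_1(u_1v_1)\in C_1$ and $h_2(ww')\in C_2$, hence is $2$-colored; distinct $w'$ give distinct cycles, contributing $d_2$ of them. Separately, $e$ lives in the copy $G_1\times\{w\}$, and since $u_1v_1$ lies in at least $s_1-1$ $2$-colored $4$-cycles of $(G_1,h_1)$, each such cycle lifts verbatim to a \emph{layer} cycle of $G$ all of whose edges are $G_1$-edges, contributing at least $s_1-1$ more. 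A mixed cycle uses exactly two $G_2$-edges while a layer cycle uses none, so the two families are disjoint; thus $e$ lies in at least $d_2+(s_1-1)=(d_2+s_1)-1$ $2$-colored $4$-cycles.

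By the symmetry $G_1\leftrightarrow G_2$ in the definition of the product, the same argument applied to a $G_2$-edge yields at least $(d_1+s_2)-1$ $2$-colored $4$-cycles through it. Consequently every edge of $G$ lies in at least $\min\{d_2+s_1,\,d_1+s_2\}-1=s-1$ such cycles, which is exactly the claim with $s=\min\{d_1+s_2,\,d_2+s_1\}$. I expect the only real care to be in the bookkeeping of the count: confirming that the mixed cycles are genuinely $2$-colored (this uses the disjointness of $C_1$ and $C_2$) and that the mixed and layer families do not overlap, so that the two counts add rather than merely bound one another. Everything else is the routine observation that proper colorings and $2$-colored $4$-cycles of a factor survive in each layer of the product.
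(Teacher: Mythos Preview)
Your argument is correct and follows essentially the same route as the paper: define $h$ on $G$ from disjoint-colored standard colorings of the factors, then for each edge count the ``mixed'' $4$-cycles coming from neighbors in the other factor together with the lifted layer cycles from the factor it lives in. If anything you are slightly more careful than the paper, since you explicitly verify that the mixed and layer families are disjoint before adding the two counts.
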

\begin{proof}
By the definition of the Cartesian product of graphs, it is straightforward that $G$ is $d$-regular graph with $d=d_1+d_2$.
Let $h_1$ be a standard coloring of $G_1$ and $h_2$ be a standard coloring of $G_2$
such that the set of colors in $h_1$ and the set of colors in $h_2$ are disjoint.
We define an edge coloring $h$ of $G$:
For two adjacent vertices $u=(u_1,u_2)$ and $v=(u_1,v_2)$ in $G$, the edge $uv$ is given the color $h(uv)=h_2(u_2v_2)$
and for two adjacent vertices $u=(u_1,u_2)$ and $v=(v_1,u_2)$ in $G$, the edge $uv$ is given the color $h(uv)=h_1(u_1v_1)$.
Thus $h$ is a proper $d$-edge coloring of $G$.

Note that an edge $uv$ of $G$ with $u=(u_1,u_2)$ and $v=(u_1,v_2)$ is contained in a $2$-colored $4$-cycle
$uvztu$ with $z=(u_i,v_2)$ and $t=(u_i,u_2)$ ($u_i$ is neighbour of $u_1$ in $G_1$). Furthermore, if $u_2v_2z_2t_2u_2$ is
a $2$-colored $4$-cycle in $G_2$, then $uvz't'u$ with $z'=(u_1,z_2)$ and $t'=(u_1,t_2)$ is a $2$-colored $4$-cycle in $G$.
Since the degree of $u_1$ is $d_1$ and every edge of $G_2$ is in at least $s_2 -1$ $2$-colored $4$-cycles,
an edge $uv$ of $G$ with $u=(u_1,u_2)$ and $v=(u_1,v_2)$ belongs to at least $d_1 + s_2 -1$ $2$-colored $4$-cycles. 
Similarly, an edge $uv$ of $G$ with $u=(u_1,u_2)$ and $v=(v_1,u_2)$ belongs to  at least $d_2 +s_1 -1$ $2$-colored $4$-cycles. 
Therefore, we can conclude that $G$ is a $(d, s)$-edge colorable graph with $d=d_1+d_2$ and $s=\min \{d_1 +s_2 ,d_2+s_1\}$.

\end{proof}

In the remaining part of this section, we examine some other
graphs that belong to the family of $(d,s)$-edge colorable graphs.

Let $G$ be a finite group and let $S$ be a generating set of $G$ such that $S$ does not contain
the identity element $e$, $|S|=d$ and $S=S^{-1}$ (which means if $a \in S$ then $a^{-1} \in S$). 
The undirected Cayley graph Cay($G$, $S$) over the set 
$S$ is defined as the graph whose vertex set is $G$ and 
where two vertices $a,b \in G$ are adjacent 
whenever $\{ab^{-1}, ba^{-1}\} \subseteq S$. It is straightforward that Cay($G$, $S$)
is a $d$-regular graph, Lemma \ref{Cayley1} and Lemma \ref{Cayley2}
show that if $S$ satisfies some further conditions then Cay($G$, $S$) is a $(d,s)$-edge colorable graph.

\begin{lemma}
\label{Cayley1}
Let Cay\emph{($G$, $S$)} be an undirected Cayley graph  on a 
group $G$ over the generating set 
$S \subseteq G \setminus \{e\}$. If $a=a^{-1}$ for every $a \in S$, $|S|=d$ and there exits a subset $S_c \subseteq  S$, 
$|S_c|=s$, satisfying that every element of $S_c$ is commutative with all elements in $S$, then Cay\emph{($G$, $S$)} is a $(d,s)$-edge colorable graph.
\end{lemma}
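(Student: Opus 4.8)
The plan is to exhibit an explicit standard coloring of $\mathrm{Cay}(G,S)$ and count the $2$-colored $4$-cycles through each edge. First I would define the candidate coloring $h$: since every $a \in S$ is an involution ($a = a^{-1}$), each generator $a$ determines a perfect matching $M_a = \{\{x, xa\} : x \in G\}$ on $G$, and these $d$ matchings partition $E(\mathrm{Cay}(G,S))$. Assigning the color $a$ to every edge of $M_a$ gives a proper $d$-edge coloring, because the $d$ edges at any vertex $x$ are exactly $\{x, xa\}$ for the $d$ distinct $a \in S$, so they receive $d$ distinct colors. This is the coloring I would propose as the standard coloring.

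Next I would analyze the $2$-colored $4$-cycles. A $4$-cycle through a fixed edge $e = \{x, xa\}$ that is $2$-colored must alternate two colors $a$ and $b$ with $b \in S$, $b \neq a$; the natural candidate is the cycle $x, xa, xab, xb, x$. For this to be a genuine $4$-cycle I need $x, xa, xab, xb$ to be four distinct vertices and the closing edge $\{xb, x\}$ to have color $b$ (automatic, since $\{x, xb\} \in M_b$) while $\{xa, xab\} \in M_b$ and $\{xab, xb\} \in M_a$ require $xab = (xb)a$, i.e.\ $ab = ba$. This is precisely where the commutativity hypothesis enters: if $b$ commutes with $a$, then $xab = xba$, the four vertices close up into a $2$-colored $4$-cycle colored $\{a,b\}$. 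Therefore the key step is: for each $b \in S \setminus \{a\}$ that commutes with $a$, the edge $e$ lies in such a $2$-colored $4$-cycle, and I should verify these vertices are distinct (using that $a, b$ are nontrivial distinct involutions, so $xa, xb, xab$ are all different from $x$ and from each other, noting $a \neq b$ and $ab \neq e$).

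To finish I would count, for a fixed edge $e = \{x, xa\}$ with $a \in S$, how many $b$ give such cycles. By hypothesis there is a subset $S_c \subseteq S$ with $|S_c| = s$ whose elements commute with \emph{all} of $S$. If $a \in S_c$, then every $b \in S \setminus \{a\}$ commutes with $a$, yielding at least $d - 1 \geq s - 1$ cycles. If $a \notin S_c$, then every $b \in S_c$ commutes with $a$; excluding the at most one element $b = a$ (which cannot occur here since $a \notin S_c$), all $s$ elements of $S_c$ qualify, giving at least $s$, hence at least $s - 1$, $2$-colored $4$-cycles through $e$. In either case $e$ lies in at least $s - 1$ $2$-colored $4$-cycles, so $h$ is a standard coloring witnessing that $\mathrm{Cay}(G,S)$ is $(d,s)$-edge colorable.

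The main obstacle I anticipate is not the counting but confirming that the cycles produced for distinct $b$ are genuinely distinct $2$-colored $4$-cycles and that each truly has four distinct vertices; the involution condition $a = a^{-1}$ is what keeps the matchings well-defined and forces $\{xa, xab\} = \{xa, (xa)b\} \in M_b$, so I would take care to check that $b \neq a$ prevents degeneracy and that two different choices $b, b'$ give cycles sharing only the edge $e$, matching the uniqueness observation already recorded in the paper that each edge $e' \neq e$ lies in at most one $2$-colored $4$-cycle through $e$.
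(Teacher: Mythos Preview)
Your proof is correct and follows essentially the same approach as the paper: define the coloring $h$ by assigning to each edge $\{x,xa\}$ the color $a$, then for each $b\in S_c\setminus\{a\}$ exhibit the $2$-colored $4$-cycle $x,\,xa,\,xab,\,xb$ using $ab=ba$, and conclude that every edge lies in at least $s-1$ such cycles. Your write-up is in fact more careful than the paper's, which omits the verification of vertex distinctness and the case split on whether $a\in S_c$.
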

\begin{proof}
Let $h$ be the proper $d$-edge coloring of Cay($G$, $S$) such that every edge $uv$ in Cay($G$, $S$)
is colored $a$ if $uv^{-1}=vu^{-1}=a \in S$. For an edge $uv$ colored $a$, consider an arbitrary element
$b \in S_c$ and let $z=vb$, $t=ub$, then the edges $vz$ and $ut$ are colored $b$ in $h$. Furthermore, since $b$ 
is commutative with $a$, i.e. $ab=ba$, we have $z=vb=uab=uba=ta$. This implies that there is an edge between 
$z$ and $t$, and this edge is colored $a$ in $h$. Hence, $uvztu$ is a $2$-colored $4$-cycle. 
Because $|S_c|=s$, each edge of Cay($G$, $S$) is in at least $s-1$ $2$-colored $4$-cycles.
It follows that Cay($G$, $S$) is a $(d,s)$-edge colorable graph.
\end{proof}

\begin{lemma}
\label{Cayley2}
Let Cay\emph{($G$, $S$)} be an undirected Cayley graph on an
Abelian group $G$ over the generating set 
$S \subseteq G \setminus \{e\}$, $S=S^{-1}$ and $|S|=d$. Let 
$S_k=\{s_1, s_2,...,s_k\}$ be a subset
of $S$ such that $S_k \cup S^{-1}_k=S$ and $S_k$ does not contain two different elements $s_i \neq s_j$ satisfying 
that $s_i=s^{-1}_j$.
%$($which also means for arbitrary $s \in S$, there exists $i \in [1,k]$ satisfying $s=s_i$ or $s=s^{-1}_i$ $)$. 
If $S$ has the following properties:
\begin{itemize}
\item[(i)] every element $s_i$ of $S_k$ has even order $d_i$ $({s_i}^{d_i}=s_i^0=e)$;
\item[(ii)] for every element $g \in G$, there is exactly one 
sequence $(x_1,x_2,\dots,x_k)$ $(x_i \in [0,d_i -1]$ for $i \in [1,k])$ such that $g={s_1}^{x_1}{s_2}^{x_2}...{s_k}^{x_k}$;
%If there are two set, we do not know the parity of x_i to color the graph
\end{itemize}
then Cay\emph{($G$,$S$)} is a $(d,d)$-edge colorable graph.
\end{lemma}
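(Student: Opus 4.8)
The plan is to reduce everything to a transparent product structure and then exhibit the coloring explicitly. First I would exploit condition (ii): the map $\mathbb{Z}_{d_1}\times\cdots\times\mathbb{Z}_{d_k}\to G$ sending $(x_1,\dots,x_k)$ to $s_1^{x_1}\cdots s_k^{x_k}$ is a homomorphism because $G$ is Abelian and $s_i^{d_i}=e$, and (ii) says it is a bijection, hence an isomorphism. So I may identify $G\cong\mathbb{Z}_{d_1}\times\cdots\times\mathbb{Z}_{d_k}$ with $s_i$ the $i$-th standard generator and $S=S_k\cup S_k^{-1}=\{s_i^{\pm1}\}$; here $s_i$ is an involution exactly when $d_i=2$. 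Writing $v_i$ for the $i$-th coordinate of a vertex $v$, an edge of direction $i$ joins $v$ and $v+s_i$. Counting involutions and non-involutions gives $d=|S|=\#\{i:d_i=2\}+2\,\#\{i:d_i>2\}$, which will equal the number of colors used.

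Next I would define the standard coloring $h$ by directions. For each involution direction (where $d_i=2$) I assign all edges $\{v,v+s_i\}$ a single color $c_i$. For each non-involution direction I use two colors $c_i^{+},c_i^{-}$, coloring $\{v,v+s_i\}$ by $c_i^{+}$ if $v_i$ is even and by $c_i^{-}$ if $v_i$ is odd. Because $d_i$ is even, this alternation is consistent around each cyclic factor, so it is a proper coloring of that factor; this is the first place the even-order hypothesis is genuinely used. Then I would check properness at a vertex $v$: the two direction-$i$ edges incident to $v$, namely $\{v,v+s_i\}$ and $\{v,v-s_i\}=\{v-s_i,(v-s_i)+s_i\}$, have $i$-coordinates $v_i$ and $v_i-1$ of opposite parity, hence receive the two distinct colors $c_i^{+},c_i^{-}$; edges of distinct directions use disjoint color sets, and the total is $d$.

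The heart of the argument is counting $2$-colored $4$-cycles through a fixed edge $e=\{v,v+s_i\}$. For each direction $j\ne i$ there is a coordinate square $v,\,v+s_i,\,v+s_i\pm s_j,\,v\pm s_j,\,v$. Since changing the $j$-th coordinate leaves the parity of the $i$-th coordinate untouched, both direction-$i$ edges of the square carry the color of $e$; likewise the two direction-$j$ edges share a color, so the square is $2$-colored. Translating the square by $+s_j$ versus $-s_j$ flips the parity of the $j$-coordinate, so for a non-involution direction $j$ both partner colors $c_j^{+},c_j^{-}$ are realized (a single square suffices for an involution direction). Summing over the $k-1$ directions $j\ne i$ realizes all $d-1$ colors different from $h(e)$ when $e$ lies in an involution direction, and exactly $d-2$ of them when $e$ lies in a non-involution direction.

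The main obstacle is the single remaining color in the non-involution case, namely the partner color $c_i^{-}$ of $e$'s own direction. A $2$-colored $4$-cycle carrying the pair $\{c_i^{+},c_i^{-}\}$ must use only direction-$i$ edges, i.e.\ it must live inside one cyclic factor; the candidate is $v,\,v+s_i,\,v+2s_i,\,v+3s_i,\,v$, whose edges alternate $c_i^{+},c_i^{-},c_i^{+},c_i^{-}$. This is precisely the delicate step demanding care, because the walk closes into a genuine $4$-cycle only when $v+3s_i$ is adjacent to $v$, i.e.\ when $s_i^{3}=s_i^{-1}\in S$; verifying that the structural hypotheses guarantee this closure is where the real work lies, and is the step I expect to be the crux of the whole proof. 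Once the partner pair is accounted for, every edge lies in exactly $d-1$ $2$-colored $4$-cycles, which is the maximum possible, so $h$ witnesses that Cay$(G,S)$ is a $(d,d)$-edge colorable graph.
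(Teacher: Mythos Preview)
Your construction of the coloring $h$ and of the coordinate $4$-cycles is exactly the one the paper uses, so on the level of approach you are aligned with the paper. The divergence is at the step you yourself flag as ``the crux'': you observe that for an edge in a non-involution direction $i$ the only candidate $2$-colored $4$-cycle realizing the partner pair $\{c_i^{+},c_i^{-}\}$ is the walk $v,\,v+s_i,\,v+2s_i,\,v+3s_i,\,v$, and you defer to the hypotheses to guarantee that it closes. It does not. Closure requires $s_i^{4}=e$, i.e.\ $d_i\in\{2,4\}$, and nothing in (i)--(ii) forces this. Take $G=\mathbb{Z}_6$, $S=\{1,-1\}$, $S_k=\{1\}$: every hypothesis is satisfied ($d_1=6$ is even, the representation in (ii) is unique), yet $\mathrm{Cay}(G,S)=C_6$ contains no $4$-cycle at all and is therefore not $(2,2)$-edge colorable. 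So the step you isolate as delicate is in fact a genuine obstruction, not just a bookkeeping detail.

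The paper's own proof has the same gap, only hidden: it asserts $p_u(s_j)=p_v(s_j)$ for an ``arbitrary $s\in S$'', but when $j=i$ one has $p_v(s_i)=p_u(s_i)+1$, so the cases $s\in\{s_i,s_i^{-1}\}$ yield degenerate walks rather than $4$-cycles, and the honest count is $d-2$ (not $d-1$) $2$-colored $4$-cycles through an edge in a non-involution direction. Your analysis is thus sharper than the paper's in that you correctly locate the problem; what remains is to recognize that it cannot be repaired under the stated hypotheses. The statement becomes true if one adds the assumption $d_i\in\{2,4\}$ for every $i$ (equivalently $s_i^4=e$ for all $i$), or alternatively if the conclusion is weakened to ``$(d,d-1)$-edge colorable''.
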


\begin{proof}
We write $S=\{s_1,\dots,s_k, s^{-1}_1,\dots,s^{-1}_k\}$; note that the size of $S$ may not be $2k$, since
$S$ may contain some  element $s_x$ with $s_x=s^{-1}_x$. Consider an edge $uv$ of Cay($G$, $S$);
without loss of generality assume that $v=us_i$ ($u=vs^{-1}_i$)  for some $i \in [1,k]$.
The condition $(ii)$ implies that there exists exactly one sequence
$(x_1,x_2,\dots,x_k)$ such that  $u={s_1}^{x_1} \dots {s_i}^{x_i} \dots {s_k}^{x_k}$
and $v={s_1}^{x_1}\dots{s_i}^{(x_i+1) \mod {d_i}}\dots{s_k}^{x_k}$.
We color the edge $uv$ by color $s_i$ if $x_i$ is even, and by color $s^{-1}_i$ if $x_i$ is odd.
By repeating this for all edges of Cay($G$, $S$), we obtain the proper $d$-edge 
coloring $h$.

Given an edge $e$ of Cay($G$, $S$), let $u$ and $v$ be the two endpoints
of $e$, where $v=us_i$ (for some $i \in [1,k]$).
Let $u={s_1}^{x_1}...{s_i}^{x_i}...{s_k}^{x_k}$; $x_i$
is called the {\em power of $s_i$ in $u$} ($i \in [1,k]$) 
and denoted by $p_u(s_i)$.
Consider an arbitrary element $s \in S$,
if $s=s_j \in S_k$, let $z=vs_j$, $t=us_j$, then 
$h(vz)=h(ut)=s_j$ or $h(vz)=h(ut)=s^{-1}_j$ since $p_u(s_j)= p_v(s_j)$
and $p_t(s_j)= p_z(s_j)=p_u(s_j)+1$. Furthermore, since $G$ is an Abelian group,
we have $z=vs_j=us_is_j=us_js_i=ts_i$. Thus there is an edge between 
$z$ and $t$, and $h(uv)=h(tz)$ since
$p_u(s_i)= p_t(s_i)$ and $p_v(s_i)= p_z(s_i)=p_u(s_i)+1$; hence 
$uvztu$ is a $2$-colored $4$-cycle. 
If $s \in S^{-1}_k$, we proceed similarly.
Because $|S|=d$, each edge of Cay($G$,$S$) is in at least $d-1$ 
$2$-colored $4$-cycles. It follows that Cay($G$, $S$) is a $(d,d)$-edge colorable graph.
\end{proof}
\section{Acknowledgement}
The author would like to thank Klas Markstr\"om and Carl Johan Casselgren for their help and useful discussion.
						
\end{document}